\newtheorem{thm}{Theorem}[section]
\newtheorem{cor}{Corollary}[section]
\newtheorem{lemma}{Lemma}[section]
\renewcommand{\@seccntformat}[1]{\S{\csname
the#1\endcsname}\hspace{0.5em}}
\begin{document}

\title{Strong Gelfand Pairs of the Sporadic Groups, and their Extensions}

\author{Joseph E. Marrow}
  \address{Department of Mathematics,  Southern Utah University, Cedar City, 
UT 84720, U.S.A.
E-mail:  josephmarrow@suu.edu}
\date{}
\maketitle

\begin{abstract}  
A strong Gelfand pair $(G, H)$ is a finite group $G$ and a subgroup $H$ where every irreducible character of $H$ induces to a multiplicity-free character of $G$. We determine the strong Gelfand pairs of the sporadic groups, their automorphism groups, and their covering groups. We also find the (strong) Gelfand pairs of the generalized Mathieu groups, the Tits group, and the automorphism group of the Leech Lattice.

\medskip

\noindent {\bf Keywords}: Strong Gelfand pair, sporadic group, irreducible character, multiplicity one subgroup. \newline 
\medskip
\end{abstract}
%\todo{what is this number?}

\theoremstyle{plain}
\numberwithin{equation}{section}

\setlength{\leftmargini}{1.em} \setlength{\leftmarginii}{1.em}
\renewcommand{\labelenumi}{\setlength{\labelwidth}{\leftmargin}
   \addtolength{\labelwidth}{-\labelsep}
   \hbox to \labelwidth{\theenumi.\hfill}}

\maketitle

\section{Introduction}

In \cite{lux1} the multiplicity-free permutation characters of the sporadic groups are found. This work was continued in \cite{lux2} where the multiplicity-free permutation characters of the extensions of the sporadic groups are given. 

A \textit{Gelfand pair} $(G, H)$ is a pair of groups $H\leq G$ where $\langle \chi\downarrow H, 1_H\rangle \leq 1$ for all irreducible characters $\chi$ of $G$. This is equivalent to the permutation representation of $G$ on the cosets of $H$ is multiplicity-free. Which means that \cite{lux1} gives the Gelfand pairs of the sporadic groups.  There are other equivalent conditions which we will not list here, but may be found in in \cite{hu}.

A strong Gelfand pair $(G, H)$ is a pair of groups $H\leq G$ where $\langle \chi\downarrow H, \psi\rangle\leq 1$ for all irreducible characters $\chi$ of $G$ and $\psi$ of $H$. In \cite{thesis}, an attempt was made to find all of the strong Gelfand pairs for the sporadic groups. This was incomplete, as the author was unable to determine the behavior of the sporadic groups of order larger than $10^{14}$. 

In this paper, we will finished categorizing the strong Gelfand pairs of the sporadic groups. We will also consider the (strong) Gelfand pairs of groups closely related to the sporadic groups. In \cite{grady} the idea of an extra-strong Gelfand pair is introduced. We do not consider these. 

It is always that case that $(G, G)$ is a strong Gelfand pair. In what follows, we only consider those pairs $(G, H)$ where $H\lneq G$. We define, for convenience, a \textit{(strong) Gelfand subgroup} to be a proper subgroup $H\lneq G$ where $(G, H)$ is a (strong) Gelfand pair.

By Frobenius Reciprocity \cite{jl} we obtain Lemma \ref{stack}
\begin{lemma}\cite[Lemma 3.1]{hu}\label{stack}
Suppose $K \leq H \leq G$ are groups. If $(G, H)$ is not a strong Gelfand pair, then neither is $(G, K)$.
\end{lemma}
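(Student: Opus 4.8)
The plan is to prove the contrapositive: if $(G,K)$ is a strong Gelfand pair, then so is $(G,H)$. Equivalently, starting from a pair of irreducible characters that witnesses the failure of the strong Gelfand condition for $(G,H)$, I would manufacture such a witnessing pair for $(G,K)$.

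Concretely, suppose $(G,H)$ is not a strong Gelfand pair, and fix an irreducible character $\chi$ of $G$ and an irreducible character $\psi$ of $H$ with $m := \langle \chi\downarrow H,\psi\rangle \geq 2$. The first step is to restrict one further level: by transitivity of restriction, $\chi\downarrow K = (\chi\downarrow H)\downarrow K$, and since $\chi\downarrow H - m\psi$ is a genuine character of $H$, its restriction $\chi\downarrow K - m(\psi\downarrow K)$ is a genuine character of $K$; that is, $\chi\downarrow K$ contains $m(\psi\downarrow K)$ as a subcharacter. The second step is to pass to an irreducible constituent: $\psi\downarrow K$ has degree $\psi(1) \geq 1$, hence has some irreducible constituent $\psi'$ of $K$, occurring in $\psi\downarrow K$ with multiplicity $a \geq 1$. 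The third step is simply to read off $\langle \chi\downarrow K, \psi'\rangle \geq m\,\langle \psi\downarrow K, \psi'\rangle = ma \geq 2$, which shows $(G,K)$ is not a strong Gelfand pair.

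There is no real obstacle here; the only point requiring (minimal) care is that one cannot take $\psi'$ to be $\psi\downarrow K$ itself, since that character need not be irreducible, so one passes to a constituent and uses non-negativity of all the multiplicities involved to preserve the bound $\geq 2$. It is worth recording the dual phrasing, which makes the appeal to Frobenius reciprocity transparent: $(G,H)$ failing to be a strong Gelfand pair means that $\psi\uparrow G$ fails to be multiplicity-free for some irreducible $\psi$ of $H$, and then for any irreducible constituent $\psi'$ of $\psi\downarrow K$ one has $\langle \psi'\uparrow G, \chi\rangle = \langle \psi', \chi\downarrow K\rangle \geq \langle \psi', \psi\downarrow K\rangle\,\langle \psi, \chi\downarrow H\rangle \geq 2$, so $\psi'\uparrow G$ is not multiplicity-free either. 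Either formulation yields the lemma immediately.
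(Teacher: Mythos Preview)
Your argument is correct and is precisely the proof the paper has in mind: the paper does not spell out a proof but simply remarks that the lemma follows from Frobenius reciprocity (citing \cite{hu}), and your dual formulation via induction is exactly that observation. One cosmetic point: what you actually carry out is the direct implication (failure for $(G,H)$ implies failure for $(G,K)$), not its contrapositive, so your opening sentence is slightly misworded; the mathematics is unaffected.
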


Because of this, we will often discuss maximal subgroups, making use of the contrapositive.

An additional result which will be of use to us is Lemma \ref{normal}
\begin{lemma}\cite[Lemma 3.2]{hu}\label{normal}
Let $N \leq H \leq G$ with $N\trianglelefteq G$. Then $(G, H)$ is a (strong) Gelfand pair if and only if $(G/N, H/N)$ is a (strong) Gelfand pair.
\end{lemma}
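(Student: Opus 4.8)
The plan is to reduce both assertions to the interaction of induction with inflation along $N\trianglelefteq G$. The key elementary fact is that induction from $H$ to $G$ commutes with inflation across $N$: if $\psi$ is an irreducible character of $H$ with $N\le\ker\psi$ and $\bar\psi$ is the corresponding irreducible character of $H/N$, then $\psi\uparrow G$ is the inflation to $G$ of $\bar\psi\uparrow(G/N)$. A quick way to see that $N$ at least acts trivially on the induced module is that, for $n\in N$ and $g$ in a transversal of $H$ in $G$, one has $g^{-1}ng\in N\le H\cap\ker\psi$, so $n$ fixes the summand $g\otimes V_\psi$ and hence the whole module; identifying the resulting character of $G/N$ with $\bar\psi\uparrow(G/N)$ is then routine. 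Since inflation carries distinct irreducibles of $G/N$ to distinct irreducibles of $G$ and preserves inner products, $\psi\uparrow G$ is multiplicity-free if and only if $\bar\psi\uparrow(G/N)$ is.

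For Gelfand pairs this already finishes the argument: $1_H$ is trivial on $N$, so $1_H\uparrow G$ is the inflation of $1_{H/N}\uparrow(G/N)$, and one of these is multiplicity-free exactly when the other is; hence $(G,H)$ is a Gelfand pair if and only if $(G/N,H/N)$ is. For the strong case I would use the reformulation, immediate from Frobenius reciprocity \cite{jl}, that $(G,H)$ is a strong Gelfand pair if and only if $\psi\uparrow G$ is multiplicity-free for every irreducible character $\psi$ of $H$. One direction is then immediate: if $(G,H)$ is a strong Gelfand pair then in particular $\psi\uparrow G$ is multiplicity-free whenever $N\le\ker\psi$, so by the first paragraph $\bar\psi\uparrow(G/N)$ is multiplicity-free for every irreducible character $\bar\psi$ of $H/N$, i.e.\ $(G/N,H/N)$ is a strong Gelfand pair.

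The converse for strong Gelfand pairs is the step I expect to be the main obstacle: the hypothesis directly bounds $\psi\uparrow G$ only for the $\psi$ trivial on $N$, while one must handle every irreducible $\psi$ of $H$. My plan is to pass to the product group, using the standard reformulation that $(G,H)$ is a strong Gelfand pair if and only if $(G\times H,\Delta H)$ is a Gelfand pair, where $\Delta H=\{(h,h):h\in H\}$. Here $N\times N\trianglelefteq G\times H$ is contained in the intermediate subgroup $K=\{(x,y)\in H\times H:xN=yN\}$, which also contains $\Delta H$; under $(G\times H)/(N\times N)\cong (G/N)\times(H/N)$ the pair $\bigl((G\times H)/(N\times N),\,K/(N\times N)\bigr)$ becomes $\bigl((G/N)\times(H/N),\,\Delta(H/N)\bigr)$, so the already-established Gelfand half of the lemma shows that ``$(G\times H,K)$ is a Gelfand pair'' is equivalent to ``$(G/N,H/N)$ is a strong Gelfand pair''. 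It then remains to compare $(G\times H,\Delta H)$ with $(G\times H,K)$ along the inclusion $\Delta H\le K$, i.e.\ to show that the constituents of $1_{\Delta H}\uparrow K$ other than $1_K$ contribute no repeated irreducible constituent of $G\times H$. Unwound, this is a Clifford-theoretic problem relative to $N\trianglelefteq H$: decompose $\psi\downarrow N$ into an $H$-orbit of irreducible characters of $N$ and track the matching constituents of $\chi\downarrow H$ for $\chi$ irreducible over $G$. I expect that case analysis, especially when $\psi$ is nontrivial on $N$, to carry the weight of the proof.
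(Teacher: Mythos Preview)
Your argument for the Gelfand-pair equivalence and for the forward implication in the strong case (namely, that $(G,H)$ strong Gelfand forces $(G/N,H/N)$ strong Gelfand) is correct and is exactly the inflation/induction compatibility one expects. The paper itself supplies no proof at all for this lemma; it is simply quoted from \cite{hu}, so there is nothing to compare your method against on that front.

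The real problem is the converse you single out as the ``main obstacle'': it is not an obstacle but a false statement, so the Clifford-theoretic case analysis you propose can never terminate. Take $G=Q_8$, $H=N=Z(G)\cong C_2$. Then $G/N\cong C_2\times C_2$ is abelian and $H/N=1$, so $(G/N,H/N)$ is trivially a strong Gelfand pair; yet the $2$-dimensional irreducible $\rho$ of $Q_8$ restricts to the centre as $2\epsilon$, whence $\langle\rho\!\downarrow\!H,\epsilon\rangle=2$ and $(Q_8,Z(Q_8))$ is \emph{not} a strong Gelfand pair. (Any extraspecial $p$-group with $H=N=Z(G)$ behaves the same way.) Your product-group reduction is in fact sound up to the very last step: you correctly show that ``$(G/N,H/N)$ strong Gelfand'' is equivalent to ``$(G\times H,K)$ Gelfand'', and what remains is to pass from $K$ down to $\Delta H$. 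That passage is exactly where the argument must break, since the non-trivial constituents of $1_{\Delta H}\!\uparrow\!K$ can, after induction to $G\times H$, collide with constituents of $1_K\!\uparrow\!(G\times H)$. Incidentally, the paper's own proof of Theorem~\ref{ext} invokes precisely this invalid direction of the lemma; the theorem is nonetheless true, but it needs an independent Clifford-theory argument (for $H\trianglelefteq G$ of prime index, each $\psi\!\uparrow\!G$ is either irreducible or a sum of $p$ distinct extensions) rather than an appeal to Lemma~\ref{normal}.
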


Throughout, we will freely use so-called ATLAS notation \cite{atlas}.

\section{The Strong Gelfand Pairs of the Sporadic Groups}

\begin{thm}
The strong Gelfand pairs of the sporadic groups are:
\begin{center}
\begin{tabular}{cccc}
\centering
$(M_{11}, M_{10});$ & $(M_{11}, L_2(11));$ & $(M_{12}, M_{11});$ & $(M_{22}, L_3(4)).$
\end{tabular}
\end{center}
\end{thm}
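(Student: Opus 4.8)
The plan is to proceed group by group over the 26 sporadic groups, in each case using Lemma \ref{stack} to reduce to a finite check on maximal subgroups. For a fixed sporadic group $G$, I would enumerate the maximal subgroups $M$ from the ATLAS; for each such $M$, the pair $(G,M)$ is tested for the strong Gelfand property by computing, for every pair of irreducible characters $\chi \in \mathrm{Irr}(G)$ and $\psi \in \mathrm{Irr}(M)$, the inner product $\langle \chi \downarrow M, \psi \rangle$ and checking it never exceeds $1$. Equivalently, and more efficiently, one checks that the permutation-like character $(\pi_M)^2$ where $\pi_M = (1_M)\uparrow G$ decomposes with the right multiplicities — but the cleanest uniform formulation is: $(G,M)$ is a strong Gelfand pair if and only if $\langle \chi\downarrow M, \chi\downarrow M\rangle \le$ (number of distinct constituents) fails to pin it down, so instead I would directly use that $(G,H)$ is strong Gelfand iff the induced character $(\mathrm{reg}_H)\uparrow G$ restricted appropriately is multiplicity free, which in ATLAS-accessible terms reduces to the condition that $(1_{H\times H})\uparrow (G\times G)$ restricted to the diagonal, i.e. $\sum_{g} |H \cap H^g|/|H|$-type class-function computations, all have coefficients $\le 1$. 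Concretely the character-theoretic test I would implement is: compute the fusion map $\mathrm{Irr}(M) \to$ class functions on $G$ via induction, then verify multiplicity-freeness of each induced $\psi\uparrow G$.

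The key organizing step is the reduction via Lemma \ref{stack}: if $M$ is a maximal subgroup of $G$ and $(G,M)$ is \emph{not} strong Gelfand, then no subgroup of $M$ is a strong Gelfand subgroup of $G$ either, so we may discard the entire subgroup lattice below $M$. Thus for each $G$, I would first test all maximal subgroups. If every maximal subgroup fails, then $G$ has no proper strong Gelfand subgroup at all and we are done with $G$. Only when some maximal $M$ passes (or is too large to decide directly) do we need to descend: re-examine the maximal subgroups of $M$ inside $G$, and iterate. The claimed answer shows this descent terminates quickly — the four surviving pairs $(M_{11},M_{10})$, $(M_{11},L_2(11))$, $(M_{12},M_{11})$, $(M_{22},L_3(4))$ are all index-small subgroups of the three smallest Mathieu groups, and one expects every maximal subgroup of every larger sporadic group to fail the test at the first level.

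The concrete computations would be carried out in GAP using the character table library: for most sporadic groups the character table of $G$ and of each maximal subgroup $M$, together with the class fusion $M \hookrightarrow G$, are available, so $\psi \uparrow G$ is computed by the standard induction formula and its decomposition into $\mathrm{Irr}(G)$ read off by inner products. Where a needed character table or fusion is missing from the library (this is the generic situation for the very large groups near the top — $\mathbb{B}$, $\mathbb{M}$, and some others of order exceeding $10^{14}$, exactly the cases left open in \cite{thesis}), I would fall back on partial information: it suffices to exhibit a single pair $(\chi,\psi)$ with $\langle\chi\downarrow M,\psi\rangle \ge 2$, and for a failure witness one often does not need the full table — degree and centralizer-order bounds, or the fact that $[G:M]$ is so large that $(1_M)\uparrow G$ already contains some $\chi$ with multiplicity $\ge 2$ (which by Frobenius reciprocity forces $(G,M)$ to not even be a Gelfand pair, let alone strong), will dispose of almost all large maximal subgroups immediately.

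The main obstacle, and the gap that distinguishes this paper from \cite{thesis}, will be precisely those maximal subgroups of the largest sporadic groups for which neither the full character table / fusion is available nor is there an obvious cheap failure witness. For these I anticipate needing case-specific arguments: using known multiplicity-free permutation characters from \cite{lux1} to bound from below the number of constituents of $(1_M)\uparrow G$, using the classification of such $M$ as extensions of smaller groups together with Lemma \ref{normal} to pass to a quotient where the computation is feasible, or using block-theoretic or rationality constraints on $\mathrm{Irr}(G)$ to show some induced character must repeat. I expect that after this case analysis, every maximal subgroup of every sporadic group other than $M_{11}$, $M_{12}$, $M_{22}$ fails, and within those three the descent confirms exactly the four listed pairs (checking along the way that no \emph{non-maximal} subgroup of $M_{12}$ or $M_{22}$ sneaks in, which is handled by Lemma \ref{stack} once the relevant maximal subgroup below $M_{11}$, $L_3(4)$, etc., is shown to fail).
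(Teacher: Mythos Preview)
Your overall plan---test maximal subgroups by inducing each $\psi\in\mathrm{Irr}(M)$ to $G$ and checking multiplicity-freeness, then descend only on survivors via Lemma~\ref{stack}---is correct, and would eventually succeed. But you are working much harder than the paper does, and the ``main obstacle'' you anticipate for the large groups dissolves under one observation the paper uses at the outset: every strong Gelfand pair is in particular a Gelfand pair (take $\psi = 1_H$ in the definition). Since \cite{lux1} already provides the \emph{complete} list of Gelfand pairs $(G,H)$ for every sporadic $G$---including the Monster and Baby Monster---the candidate list is known in advance and is short. The paper's proof is simply: run the strong-Gelfand test on each pair from \cite{lux1} using the GAP character table library. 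No case-specific failure witnesses, no block-theoretic or degree-bound arguments, no partial-table tricks are needed; the groups of order beyond $10^{14}$ that blocked \cite{thesis} are handled precisely because \cite{lux1} has already determined their (few) Gelfand subgroups. Your route effectively re-derives the Gelfand-pair classification as a byproduct rather than taking it as input. (A minor note: the first paragraph of your proposal wanders through several inequivalent reformulations---$\pi_M^2$, the diagonal restriction of $(1_{H\times H})\uparrow(G\times G)$---before settling on the correct test; only the last formulation, multiplicity-freeness of each $\psi\uparrow G$, is the right one, and you should discard the others.)
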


\begin{proof} By the definition it is only necessary to consider the known Gelfand pairs given in \cite{lux1}. Our result follows in the same manner as given in \cite{thesis}, being an exhaustive search, however we used GAP \cite{GAP} for the calculations. An example of our code is provided in \S\ref{apx1}. Both non-conjugate copies of $M_{11}$ make $(M_{12}, M_{11})$ a strong Gelfand pair.
\end{proof}

%\begin{thm}
%Neither $(M, U_3(4))$ nor $(M, L_2(13))$ are Gelfand pairs.
%\end{thm}

%\begin{proof}
%While \cite{lux1} does not claim that these are Gelfand pairs, we wished to verify it, these subgroups having only recently been proven to be maximal in the Monster \cite{new}. As both \cite{lux1} and \cite{thesis} work primarially by direct computation, we see no reason to break with tradition, verifying by CODE (?) that these are not Gelfand subgroups.

\section{Automorphism and Covering Groups}
In \cite{lux1} the Gelfand pairs of the automorphism groups of each of the sporadic groups are given. In interest of completeness, we include the strong Gelfand pairs here for those cases where $G\neq Aut(G)$.

\begin{thm}\label{autosgp}
The strong Gelfand pairs of the automorphism groups of the sporadics are given in Table \ref{automorph}.
\end{thm}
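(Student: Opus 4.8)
The plan is to argue exactly as for the preceding theorem. Since every strong Gelfand pair is in particular a Gelfand pair, the candidate subgroups $H<\mathrm{Aut}(G)$ are precisely those appearing in the classification of the Gelfand pairs of the automorphism groups in \cite{lux1}, and for each such $H$ one tests the stronger condition $\langle\chi\downarrow H,\psi\rangle\leq 1$ for all irreducible characters $\chi$ of $\mathrm{Aut}(G)$ and $\psi$ of $H$. Only the twelve sporadic groups $G$ with $\mathrm{Out}(G)\cong C_2$ have $G\neq\mathrm{Aut}(G)$, so there is a finite list of twelve extensions $\mathrm{Aut}(G)=G.2$ to process. By Lemma \ref{stack} it suffices to test the maximal members of each candidate list and then propagate any failure downward through the subgroup lattice; the computations are performed in GAP \cite{GAP} from the character table library together with the stored class-fusion maps, with sample code given in \S\ref{apx1}.

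Two remarks streamline the search. First, $(G.2,G)$ is always a strong Gelfand pair: for the index-$2$ normal subgroup $G$, Clifford theory forces $\chi\downarrow G$ to be either irreducible or a sum of two distinct $G$-conjugate irreducibles for every $\chi\in\mathrm{Irr}(G.2)$, and dually forces $\psi\uparrow G.2$ to be either irreducible or a sum of two distinct extensions for every $\psi\in\mathrm{Irr}(G)$; in either case the relevant inner product is at most $1$. Hence $G$ itself occurs in Table \ref{automorph} for every one of the twelve. Moreover, if $H\leq G$ and $(G.2,H)$ is a strong Gelfand pair then so is $(G,H)$, by restricting a suitable extension or an irreducible constituent over $G$, so the only subgroups of $G$ that can occur are the strong Gelfand subgroups of $G$ found in the preceding theorem, which still have to be rechecked against $G.2$ since the property need not persist upward. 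It therefore remains to examine the candidate subgroups of $G.2$ not contained in $G$.

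For each such candidate one needs the character table of $H$ and the fusion of its classes into $G.2$. For $M_{12}.2$, $M_{22}.2$, $J_2.2$, $J_3.2$, $\mathrm{HS}.2$, $\mathrm{McL}.2$, $\mathrm{He}.2$, $\mathrm{Suz}.2$ and $\mathrm{ON}.2$ this data is available directly, either from \texttt{CTblLib} or by constructing a faithful representation of $G.2$ and of the subgroup inside GAP, so the verification is routine. The main obstacle is the large cases $\mathrm{Fi}_{22}.2$, $\mathrm{HN}.2$ and $\mathrm{Fi}_{24}'.2$, where a direct induction or restriction over the full character table is expensive and the class fusion of some relevant subgroup into the extension is not recorded in the library. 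For these I would work inside the character table library throughout: obtain the character table of $H$ — constructing it by Clifford theory from the table of $H\cap G$ when $H$ is itself an extension $(H\cap G).2$, or from the known structure of $H$ as a direct or central product — and pin down the fusion into $G.2$ using element orders, power maps, and the requirement that the resulting permutation character agree with the one listed in \cite{lux1}, any residual ambiguity in the fusion map being irrelevant to the multiplicities computed. The delicate point is the \emph{novelty} maximal subgroups of these extensions, those maximal in $G.2$ but lying in no maximal subgroup of $G$: each must be identified and tested individually, and this is where the bulk of the care in the verification lies.
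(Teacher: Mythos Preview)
Your proposal is correct and follows essentially the same route as the paper, which in fact gives no separate proof for this theorem at all: it relies implicitly on the same GAP-based exhaustive search over the Gelfand pairs from \cite{lux1} described for the preceding theorem and in the appendix. Your Clifford-theory argument that $(G.2,G)$ is always a strong Gelfand pair is the content of what the paper records immediately afterwards as Theorem~\ref{ext} (proved there via Lemma~\ref{normal} and the quotient $(p,1)$), and your downward-inheritance observation and your discussion of class-fusion issues and novelty maximals for the large extensions $\mathrm{Fi}_{22}.2$, $\mathrm{HN}.2$, $\mathrm{Fi}_{24}'.2$ go beyond anything the paper spells out.
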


\begin{table}[h!]
\centering
\caption{Strong Gelfand Pairs of the Sporadic Automorphism Groups}
\label{automorph}
\vspace{5pt}
\begin{tabular}{ccc}
Group & Automorphism Group & Strong Gelfand Pairs\\
\hline
$M_{12}$ & $M_{12}.2$ & $(M_{12}.2, M_{12})$\\
 & & $(M_{12}.2, M_{11})$\\
$M_{22}$ & $M_{22}.2$ & $(M_{22}.2, M_{22})$\\
$J_2$ & $J_2.2$ & $(J_2.2, J_2)$\\
$HS$ & $HS.2$ & $(HS.2, HS)$\\
$J_3$ & $J_3.2$ & $(J_3.2, J_3)$\\
$McL$ & $McL.2$ & $(McL.2, McL)$\\
$He$ & $He.2$ & $(He.2, He)$\\
$Suz$ & $Suz.2$ & $(Suz.2, Suz)$\\
$ON$ & $ON.2$ & $(ON.2, ON)$\\
$Fi_{22}$ & $Fi_{22}.2$ & $(Fi_{22}.2, Fi_{22})$\\
$HN$ & $HN.2$ & $(HN.2, HN)$\\
$Fi_{24}^\prime$ & $Fi_{24}^\prime.2$ & $(Fi_{24}^\prime.2,Fi_{24}^\prime)$
\end{tabular}
\end{table}

Both copies of $M_{11}$ for which $(M_{12}, M_{11})$ is a strong Gelfand pair make $(M_{12}.2, M_{11})$ a strong Gelfand pair, this because the two distinct $M_{11}$ are interchanged by an outer automorphism, and thus are conjugate in $M_{12}.2$.

Theorem \ref{autosgp} leads us to consider a more interesting result, which\textemdash while immediate\textemdash the author doesn't believe it has appeared in print previously.

\begin{thm}\label{ext}
For $p$ a prime, and $G$ a finite group, $(G.p, G)$ is a strong Gelfand pair.
%For a finite group $G$, $(G.2, G)$ is a strong Gelfand pair.
\end{thm}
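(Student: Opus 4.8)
The plan is to recast the statement in the language used throughout the paper: by Frobenius reciprocity $(G.p,G)$ is a strong Gelfand pair precisely when $\psi\uparrow^{G.p}$ is multiplicity-free for every $\psi\in\mathrm{Irr}(G)$. Write $N=G$, which is normal in $G.p$ with $(G.p)/N$ cyclic of order $p$, and observe that the conjugation action of $G.p$ on $\mathrm{Irr}(N)$ is trivial on $N$, hence factors through $(G.p)/N\cong\mathbb{Z}/p$. I would then argue by the standard Clifford-theoretic dichotomy for a normal subgroup of prime index, distinguishing whether $\psi$ is fixed by $G.p$ or not.

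First, suppose $\psi$ is not $G.p$-invariant. Since the only proper subgroup of $\mathbb{Z}/p$ is trivial, the inertia group $I_{G.p}(\psi)$ equals $N=G$, and the $G.p$-orbit of $\psi$ has length $p$. The Clifford correspondence then identifies $\mathrm{Irr}(G.p\mid\psi)$ with $\mathrm{Irr}(I_{G.p}(\psi)\mid\psi)=\{\psi\}$ via induction, so $\psi\uparrow^{G.p}$ is irreducible, in particular multiplicity-free.

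Second, suppose $\psi$ is $G.p$-invariant. This is the only case with any content, and it is where the primality of the index enters: the obstruction to extending $\psi$ from $G$ to $G.p$ lies in $H^2\big((G.p)/G,\ \mathbb{C}^\times\big)=H^2(\mathbb{Z}/p,\ \mathbb{C}^\times)$, and the Schur multiplier of a cyclic group is trivial, so $\psi$ extends to some $\hat\psi\in\mathrm{Irr}(G.p)$. By Gallagher's theorem the characters $\hat\psi\lambda$, with $\lambda$ ranging over the $p$ distinct linear characters of $(G.p)/G$, are pairwise distinct and irreducible, and the projection formula gives $\psi\uparrow^{G.p}=\hat\psi\cdot\big(1_G\uparrow^{G.p}\big)=\sum_{\lambda}\hat\psi\lambda$. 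Hence $\psi\uparrow^{G.p}$ is again multiplicity-free.

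Combining the two cases yields $\langle\chi\downarrow G,\ \psi\rangle\le 1$ for all $\chi\in\mathrm{Irr}(G.p)$ and $\psi\in\mathrm{Irr}(G)$, which is the claim. There is no computational obstacle here; the one step that must be handled with care is the extendibility of an invariant $\psi$ in the second case, and it is exactly this step that would fail for a normal subgroup of composite index, or more generally for a quotient with nontrivial Schur multiplier, so the hypothesis that $p$ is prime is precisely what makes the argument go through.
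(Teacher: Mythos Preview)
Your argument is correct: the Clifford dichotomy for a normal subgroup of prime index, together with the vanishing of $H^2(\mathbb{Z}/p,\mathbb{C}^\times)$ and Gallagher's theorem, handles both cases cleanly.

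The paper takes a much shorter route. It invokes Lemma~\ref{normal} with $N=H=G$: since $G\trianglelefteq G.p$, the pair $(G.p,G)$ is a strong Gelfand pair if and only if $(G.p/G,\,G/G)\cong(C_p,1)$ is, and the latter is immediate because the regular character of an abelian group is multiplicity-free. This argument never touches Clifford theory or cohomology, and it generalises at once to Corollary~\ref{abelext}: for any abelian $A$, the pair $(G.A,G)$ reduces to $(A,1)$, which is strong Gelfand precisely because $A$ is abelian. Your closing remark that primality is ``precisely what makes the argument go through'' is therefore a little misleading: primality is what makes \emph{your} extendibility step go through, but the result itself holds for any abelian quotient, and the quotient-lemma proof shows this with no extra work. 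What your approach buys is an explicit description of the constituents of $\psi\uparrow^{G.p}$ in each case, which the paper's proof does not provide.
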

\begin{proof}
Using Lemma \ref{normal}, since $G \leq G \leq G.p$ and $G\trianglelefteq G.p$ we see $(G.p/G, G/G) = (p, 1)$ is a strong Gelfand pair. Then so is $(G.p, G)$.
%Using Lemma \ref{normal}, since $G \leq G \leq G.2$ and $G\trianglelefteq G.2$ we see $(G.2/G, G/G) = (2, 1)$ is a strong Gelfand pair. Then so is $(G.2, G)$.
\end{proof}

Theorem \ref{ext} can be generalized.

%\begin{cor}\label{genp}
%If $G$ is a finite group, and $p$ is the least prime such that $p\mid |G|$, then there exists a subgroup $H\leq G$ where $|G|/|H|=p$ and $(G, H)$ is a strong Gelfand pair.
%\end{cor}
%\noindent\textit{Proof}.
%The existence of $H$ is not a concern [source], and since $[G\colon H]=p$ we know $G/H=p$, and thus $G=H.p$. Then $(G/H, H/H)=(p,1)$ is a Gelfand pair, and so $(G, H)$ is as well.
%\end{proof}

\begin{cor}\label{abelext}
For $A$ an abelian group, and $G$ a finite group, $(G.A, G)$ is a strong Gelfand pair.
\end{cor}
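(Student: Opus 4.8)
The plan is to mimic the proof of Theorem \ref{ext} verbatim, replacing the prime-order cyclic quotient by an arbitrary abelian one. Take $N = G$, which is normal in $G.A$ with quotient $(G.A)/G \cong A$. Since $G \le G \le G.A$, Lemma \ref{normal} applies and tells us that $(G.A, G)$ is a (strong) Gelfand pair if and only if $(A, 1)$ is a (strong) Gelfand pair, where $1$ denotes the trivial subgroup. Thus the whole statement reduces to the single claim that $(A, 1)$ is a strong Gelfand pair whenever $A$ is abelian.

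To establish that claim, note that the trivial group has a unique irreducible character $1_1$, and $1_1 \uparrow A$ is the regular character $\rho_A = \sum_{\lambda \in \mathrm{Irr}(A)} \lambda(1)\,\lambda$. Because $A$ is abelian, every $\lambda \in \mathrm{Irr}(A)$ is linear, so $\rho_A = \sum_{\lambda \in \mathrm{Irr}(A)} \lambda$, in which each irreducible constituent occurs exactly once; equivalently, for any $\lambda, \mu \in \mathrm{Irr}(A)$ we have $\langle \lambda \!\downarrow\! 1, \mu \!\downarrow\! 1\rangle = \lambda(1)\mu(1) = 1$. Hence $(A,1)$ is a strong Gelfand pair, and pulling this back through Lemma \ref{normal} gives the corollary.

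I expect no real obstacle here: all the work is done by Lemma \ref{normal}, together with the elementary fact that the regular representation of a finite abelian group is multiplicity-free. The only points worth stating carefully are, first, that the argument is indifferent to whether the extension $G.A$ is split or non-split, since all that is used is that $G$ is normal with abelian quotient; and second, that the hypothesis is sharp, since by Lemma \ref{normal} the pair $(G.A, G)$ is Gelfand exactly when $(A,1)$ is, and the latter forces $A$ to be abelian.
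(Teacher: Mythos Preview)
Your proof is correct and follows exactly the approach the paper intends: the corollary is stated without proof as the evident generalization of Theorem~\ref{ext}, and your argument---reducing via Lemma~\ref{normal} to $(A,1)$ and then observing that the regular character of an abelian group is multiplicity-free---is precisely that generalization. Your closing remark that the hypothesis is sharp is a nice addition not made explicit in the paper.
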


We also consider the extensions by sporadic groups. This fully extends the classification of Gelfand pairs given in \cite{lux1, lux2} to strong Gelfand pairs. 

\begin{thm}\label{dcover}
The double cover $2.M_{12}$ has four strong Gelfand pairs: $(2.M_{12}, 2\times M_{11})$, and $(2.M_{12}, M_{11})$, twice each.
\end{thm}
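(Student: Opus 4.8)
The plan is to reduce the problem to a finite character computation, exactly in the spirit of the proof of the first theorem. Since $2.M_{12}$ is a central extension, any strong Gelfand subgroup $H$ must in particular be a Gelfand subgroup, so by \cite{lux1,lux2} (or by the fact that $(2.M_{12}, H)$ being Gelfand descends to $M_{12}$-related data via the center) the candidate list of $H$ is restricted to subgroups lying over the maximal subgroups that already appear in the classification of multiplicity-free permutation characters of $2.M_{12}$. First I would pull the relevant subgroup lattice of $2.M_{12}$ from the ATLAS and from GAP's character table library, identify the preimages in $2.M_{12}$ of the maximal subgroups of $M_{12}$, and use Lemma \ref{stack} to restrict attention to maximal subgroups: if $(2.M_{12}, M)$ fails for every maximal $M$ containing a given candidate, that candidate is eliminated.

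Next, for each surviving candidate $H$ I would carry out the defining check directly: compute the character table of $H$ and of $2.M_{12}$, induce each irreducible $\psi$ of $H$ up to $2.M_{12}$, and verify $\langle \psi\uparrow G, \psi\uparrow G\rangle$ is square-free in the sense that no two irreducibles of $G$ share a constituent, i.e. $\langle \chi\downarrow H,\psi\rangle \le 1$ for all $\chi$. Equivalently, and more efficiently, I would compute $\langle (\psi\uparrow G)\downarrow H, \psi'\rangle$ ranging over pairs; this is the exact GAP computation illustrated in \S\ref{apx1}. The center $Z = Z(2.M_{12})\cong C_2$ splits the irreducibles of $2.M_{12}$ into those trivial on $Z$ (which factor through $M_{12}$) and the faithful ones; a candidate $H$ works for the full group if and only if it works against both blocks. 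For $H = 2\times M_{11}$ this pairs with the faithful part via the sign character of the $C_2$ factor, and for $H=M_{11}$ (a complement, if one exists, or rather a subgroup mapping isomorphically to $M_{11}\le M_{12}$) the faithful irreducibles of $2.M_{12}$ restrict and must stay multiplicity-free — this is where the two non-conjugate copies of $M_{11}\le M_{12}$, already appearing in Theorem 2.1, each give rise to a strong Gelfand pair, accounting for the "twice each" in the statement.

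The main obstacle I anticipate is not any single hard computation but bookkeeping: correctly enumerating the conjugacy classes of the relevant subgroups in $2.M_{12}$ (as opposed to in $M_{12}$), since the central extension can merge or split conjugacy classes of subgroups, and making sure the "twice" count is genuinely a count of $G$-conjugacy classes of subgroups rather than an artifact. I would handle this by working with explicit permutation or matrix generators for $2.M_{12}$ in GAP, forming the subgroup lattice up to the needed height, and cross-checking the induced-character computations against the known Gelfand (non-strong) classification in \cite{lux2} as a sanity check. A secondary point to be careful about is applying Lemma \ref{normal} in reverse: $(2.M_{12}, 2\times M_{11})$ should correspond under $Z\trianglelefteq G$ to $(M_{12}, M_{11})$ being a strong Gelfand pair, which it is by Theorem 2.1, giving an independent confirmation of that case and isolating $(2.M_{12}, M_{11})$ as the genuinely new phenomenon requiring the faithful-block check.
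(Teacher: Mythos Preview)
Your proposal is correct and takes essentially the same approach as the paper: the paper's proof consists entirely of a deferral to GAP code (``A separate file containing the code for these\textemdash and the non-existence of others\textemdash is provided''), and your plan is a more carefully organized version of exactly that computation, with the added conceptual scaffolding of Lemma~\ref{normal} to confirm the $2\times M_{11}$ cases and the Gelfand classification of \cite{lux2} to prune candidates. Your identification of the ``twice each'' as arising from the two $M_{12}$-classes of $M_{11}$ is also what underlies the paper's count.
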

\begin{proof}
A separate file containing the code for these\textemdash and the non-existance of others\textemdash is provided.
\end{proof}
No other sporadic extension has any strong Gelfand pairs, and the Gelfand pairs can be found in \cite{lux2}.

\section{Mathieu Groups}
There are Mathieu groups which are not simple. The sporadic Mathieu groups are transitive permutation groups, but the generalized Mathieu groups are stabilizers within the sporadic Mathieu groups. We saw one previously in the strong Gelfand pair $(M_{11}, M_{10})$. For the remaining Mathieu groups, we provide all Gelfand and strong Gelfand subgroups.

\begin{thm}
All Gelfand subgroups of $M_7$, $M_8$, $M_9$, $M_{10}$, $M_{20}$, and $M_{21}$ are given in Table \ref{mathgptable}.
\end{thm}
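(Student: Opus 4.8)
The plan is to proceed by direct computation, mirroring the exhaustive-search strategy already employed for the sporadic groups themselves but now applied to the small non-simple Mathieu groups $M_7, M_8, M_9, M_{10}, M_{20}, M_{21}$. First I would record explicit permutation representations of each of these groups, realizing them as point stabilizers inside the appropriate sporadic Mathieu group: $M_{21}\cong L_3(4)$ as a one-point stabilizer in $M_{22}$, $M_{20}$ as a two-point stabilizer in $M_{22}$ (an elementary abelian $2^4$ extended by $A_5$), $M_{10}$ as a one-point stabilizer in $M_{11}$ (isomorphic to $A_6.2_3$), $M_9\cong 3^2{:}Q_8$ as a one-point stabilizer in $M_{10}$, $M_8\cong Q_8$, and $M_7\cong 7{:}3$. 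These are all available in GAP, either directly or via the sporadic Mathieu groups and the \texttt{Stabilizer} function.

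Next, for each group $G$ in the list I would enumerate the conjugacy classes of subgroups using \texttt{ConjugacyClassesSubgroups}, and for each proper subgroup $H$ compute the permutation character of $G$ acting on the cosets of $H$ — equivalently $1_H\!\uparrow G$ — via \texttt{PermutationCharacter} or by inducing the trivial character from the character table of $H$. The Gelfand-pair condition is then checked by testing that $\langle 1_H\!\uparrow G,\chi\rangle\le 1$ for every irreducible $\chi$ in the character table of $G$; concretely one forms the inner product of the induced character with itself (a multiplicity-free character has norm equal to its number of irreducible constituents, but it is cleaner just to check each constituent). By Lemma \ref{stack} it suffices to examine, for each $G$, the maximal subgroups first and then descend only into those that already pass the test, which keeps the search tractable given the tiny orders involved (the largest, $M_{21}$, has order $20160$).

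Finally I would collate the surviving pairs into Table \ref{mathgptable}, and sanity-check the output against known facts: for instance $(M_{10},M_9)$ should appear since it has index $10$ and $M_9$ acts $2$-transitively on a related structure, and the classification must be consistent with the chain $(M_{11},M_{10})$ being a strong Gelfand pair together with Lemma \ref{stack}. I expect the main obstacle to be purely bookkeeping rather than mathematical: ensuring that non-conjugate copies of a given abstract subgroup inside $G$ are each tested (as already flagged in the $M_{12}$ case with two classes of $M_{11}$), and correctly naming the subgroups in ATLAS notation — several of these small groups have subgroups that are themselves non-split or that admit misleadingly similar names, so the labelling in the table needs to be cross-referenced carefully against \cite{atlas}.
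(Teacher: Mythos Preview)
Your computational approach is essentially identical to the paper's: the paper offers no separate proof for this theorem and, in the proof of the companion result on strong Gelfand pairs, simply states that ``the remainder are obtained by calculations'' in GAP after disposing of $M_7$ and $M_8$ by inspection. So the overall method matches.

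There is, however, a concrete factual error in your setup: you identify $M_7$ with the Frobenius group $7{:}3$ of order $21$, whereas in the stabilizer chain $M_{12}>M_{11}>M_{10}>M_9>M_8>M_7$ the orders drop $95040,\,7920,\,720,\,72,\,8,\,1$, and $M_7$ is the trivial group. The paper states this explicitly (``Since $M_7$ is the trivial group\ldots'') and records it in Table~\ref{mathgptable}. Running your search on $7{:}3$ would therefore produce spurious output for that case. A secondary caution: your proposed sanity check that $(M_{10},M_9)$ ``should appear'' does not agree with the paper's Table~\ref{mathgptable}, which lists only $A_6$ as a Gelfand subgroup of $M_{10}$; whatever the ultimate resolution of that discrepancy, it illustrates that such heuristic expectations should be treated as things to investigate, not as confirmation of the computation.
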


\begin{table}[h!]
\centering
\caption{Gelfand Subgroups of the Generalized Mathieu Groups}
\label{mathgptable}
\vspace{5pt}
\begin{tabular}{cccccc}
Group & Order & Iso. type & Gel. Subgroups & Max. Subgroup & Multiplicity\\
\hline
$M_7$ & $1$ & $1$ & $1$ & Improper & Improper\\
$M_8$ & $8$ & $Q_8$ & $4$ & - & $3$\\
 &  &  & $2$ & $4$ & $1$\\
$M_9$ & $72$ & $\mathrm{PSU}_3(2)$ & $3^2\colon 4$ & - & $3$\\
$M_{10}$ & $720$ & $A_6.2$ & $A_6$ & - & $1$\\
$M_{20}$ & $960$ & $2^4\colon A_5$ & $2^4\colon \mathrm{D}_{10}$ & - & $1$\\%SmallGroup(160,234)
 & & & $2^4\colon 5$ & $2^4\colon \mathrm{D}_{10}$ & $1$\\%SmallGroup(80,49)
 & & & $4^2\colon A_4$ & - & $1$\\%SmallGroup(192,1023)
 & & & $4^2\colon 3$ & $4^2\colon A_4$ & $2$\\%SmallGroup(48,3)
 & & & $2^4\colon 3$ & $4^2\colon A_4$ & $2$\\%SmallGroup(48,50)
$M_{21}$ & $20160$ & $\mathrm{L}_3(4)$ & $M_{20}$ & - & $2$
\end{tabular}
\end{table}

\begin{thm}
The strong Gelfand pairs of the generalized Mathieu groups $M_7$, $M_9$, $M_{10}$, $M_{20}$, and $M_{21}$ are precisely the Gelfand pairs of the generalized Mathieu groups, and thus are had in Table \ref{mathgptable}. For $M_8$ only the maximal subgroups are strong Gelfand subgroups.
%The only nontrivial strong Gelfand pairs of the generalized Mathieu groups are:
%\begin{enumerate}
%\item $M_7$ is the trivial group. 
%\item $M_8$ has $(M_8, 4)$ thrice;
%\item $M_9$ has $(M_9, 3^2\colon 4)$ thrice;
%\item $M_{10}$ has $(M_{10}, A_6)$;
%\item $M_{20}$
%\item $M_{21}$ has $(M_{21}, M_{20})$;\todo[color=cyan]{is this $M_{20}$?} twice.
%\end{enumerate}
\end{thm}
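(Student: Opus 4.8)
The claim has two halves. For the first half — that for $M_7, M_9, M_{10}, M_{20}, M_{21}$ the strong Gelfand subgroups coincide with the Gelfand subgroups — one inclusion is free: every strong Gelfand pair is in particular a Gelfand pair, so the strong Gelfand subgroups are a subset of the Gelfand subgroups listed in Table \ref{mathgptable}. So the work is to check, for each of the (few) subgroups appearing in that table, whether the stronger condition $\langle \chi\!\downarrow\! H,\psi\rangle \le 1$ holds for all irreducible $\chi$ of $G$ and $\psi$ of $H$. Since all the groups here are tiny (orders $1$, $72$, $720$, $960$, $20160$), I would do this as a direct character-table computation in GAP, exactly as in the proof of Theorem 2.1: for each candidate $H$, loop over $\mathrm{Irr}(H)$, induce each $\psi$ to $G$, and test multiplicity-freeness of $\psi^G$. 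Several of these can also be dispatched by hand or by the general lemmas: $M_7 = 1$ is vacuous; for $M_9 = 3^2{:}4$ the unique Gelfand subgroup $3^2{:}4 = M_9$ is improper, so nothing to check; for $M_{10} = A_6.2$ with $H = A_6$, Theorem \ref{ext} with $p = 2$ gives that $(A_6.2, A_6)$ is a strong Gelfand pair immediately; likewise $(M_{21}, M_{20})$ and the $M_{20}$ entries should be verified by the GAP loop.

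For the second half — that for $M_8 = Q_8$ only the maximal Gelfand subgroups are strong Gelfand subgroups — I would argue as follows. From Table \ref{mathgptable}, the Gelfand subgroups of $Q_8$ are the three subgroups of order $4$ (each maximal, since $[Q_8 : C_4] = 2$) and one subgroup of order $2$ (contained in a subgroup of order $4$). For a subgroup $H$ of index $2$ in any group $G$, Theorem \ref{ext} (with $p=2$) already tells us $(G, H)$ is a strong Gelfand pair, so the three order-$4$ subgroups are automatically strong Gelfand subgroups. It remains to show the order-$2$ subgroup $Z = Z(Q_8)$ is \emph{not} a strong Gelfand subgroup. But $Z \le C_4 \le Q_8$ with $C_4$ one of the order-$4$ subgroups; by Lemma \ref{stack} (contrapositive), if $(Q_8, C_4)$ \emph{is} a strong Gelfand pair this tells us nothing directly — so instead I check $(Q_8, Z)$ head on. Take $\psi$ the nontrivial character of $Z \cong C_2$; then $\psi^{Q_8}$ has degree $|Q_8|/|Z| = 4$, and decomposing it against $\mathrm{Irr}(Q_8) = \{1, 1_a, 1_b, 1_c, \chi\}$ (three more linear characters and the unique $2$-dimensional $\chi$), the only constituents are those characters that restrict nontrivially to $Z$, namely $\chi$ alone; so $\psi^{Q_8} = 2\chi$, which is not multiplicity-free. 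Hence $(Q_8, Z)$ is not a strong Gelfand pair, completing the second half.

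**Main obstacle.** There is no real obstacle here — the groups are far too small for size to be an issue, and the general lemmas (Theorem \ref{ext}, Lemma \ref{stack}) already handle the index-$2$ cases and the single negative case for $M_8$ by pure representation theory. The only mildly delicate point is bookkeeping: making sure that for $M_{20} = 2^4{:}A_5$ the five listed Gelfand subgroups (two of which are non-maximal, sitting inside $2^4{:}\mathrm{D}_{10}$ and $4^2{:}A_4$) really do all satisfy the strong condition, rather than just the plain Gelfand condition — this is not automatic since these non-maximal subgroups have index $> 2$, so Theorem \ref{ext} does not apply, and one genuinely needs the GAP induction check. I would therefore structure the proof as: (i) cite the GAP computation (as in Theorem 2.1, with code in the appendix) for the $M_{20}$ and $M_{21}$ and $M_{20}$-subgroup verifications; (ii) invoke Theorem \ref{ext} for the index-$2$ cases ($M_{10} \supset A_6$, and the three order-$4$ subgroups of $M_8$); (iii) give the one-line explicit decomposition $\psi^{Q_8} = 2\chi$ showing $Z(Q_8)$ fails.
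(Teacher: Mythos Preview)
Your overall strategy matches the paper's: $M_7$ is trivial, $M_8 = Q_8$ is handled separately, and the remaining cases are dispatched by direct computation in GAP. The paper simply cites prior references for $Q_8$, whereas your explicit argument that the nontrivial character of $Z(Q_8)$ induces to $2\chi$ is correct and more self-contained; that is a nice addition.

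There is one factual slip you should fix. You write that ``for $M_9 = 3^2{:}4$ the unique Gelfand subgroup $3^2{:}4 = M_9$ is improper, so nothing to check.'' But from Table~\ref{mathgptable}, $M_9$ has order $72$ (isomorphism type $\mathrm{PSU}_3(2)$), not $36$; the three listed Gelfand subgroups $3^2{:}4$ are genuinely proper, each of index $2$. So your plan as written skips the $M_9$ case entirely. The repair is immediate --- since each $3^2{:}4$ has index $2$ in $M_9$, Theorem~\ref{ext} already shows they are strong Gelfand subgroups --- but you should say so rather than claim there is nothing to verify.
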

\begin{proof}
Since $M_7$ is the trivial group, it only has the trivial strong Gelfand pair. $M_8$ is isomorphic to the quaternion group, and thus is covered in \cite{thesis, dihedral}. The remainder are obtained by calculations \cite{GAP}; it is useful to know that $M_{20}$ is the largest maximal subgroup of $M_{21}$.
\end{proof}

In Table \ref{mathgptable} we provide an additional name for the generalized Mathieu groups in the `Isomorphism Type' column. The `Multiplicity' column of Table \ref{mathgptable} counts the number of times this subgroup gives a Gelfand pair. For example, the group $M_8=\langle a, b\,\vert\, a^2=b^2=(ab)^2, a^4=1, ab=-ba\rangle$ has four Gelfand pairs $(M_8, \langle a\rangle),$ $(M_8, \langle b\rangle),$ $(M_8, \langle ab\rangle)$, and $(M_8, \langle a^2\rangle)$. The first three of these are maximal. If a subgroup is not maximal we list the maximal subgroup to which it belongs in the appropriate column.

The (strong) Gelfand pairs of the automorphism groups of the generalized Mathieu groups have been considered, but owing to the length we have elected to include them in the file containing the code for Theorem \ref{dcover}.

\section{Automorphism Group of the Leech Lattice}
The automorphism group of the Leech lattice, sometimes denoted $\bullet 0$ or $Co_0$, is the group $2.Co_1$ \cite{conwaylattice}. It has order $8315553613086720000$ and was first described in \cite{conwaydot}.

\begin{thm}
The only subgroups of $Co_0$ which are Gelfand subgroups are given in Table \ref{leechgptable}.
\end{thm}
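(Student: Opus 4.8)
The plan is to exploit the central extension structure $Co_0 = 2.Co_1$, writing $Z = Z(Co_0)$ for its centre of order $2$, and to reduce the problem to the already-known list of multiplicity-free permutation characters of $Co_1$ from \cite{lux1}, supplemented by a short targeted computation on the character table of $Co_0$ (available in the GAP character table library). A subgroup $H \le Co_0$ either contains $Z$ or meets it trivially, and I would treat these two cases separately. Throughout I would use the elementary fact that, for ordinary Gelfand pairs, the property is monotone \emph{upward}: if $K \le H \le G$ and $1_K^G$ is multiplicity-free, then the constituent $1_H^G$ of $1_K^G$ is multiplicity-free as well; so it suffices to examine the minimal Gelfand subgroups recorded in \cite{lux1} and pass upward through the subgroup lattice.

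\emph{Case $Z \le H$.} By Lemma \ref{normal} (with $N = Z$) the pair $(Co_0, H)$ is a Gelfand pair if and only if $(Co_1, H/Z)$ is, so the Gelfand subgroups of $Co_0$ containing $Z$ are exactly the full preimages in $Co_0$ of the Gelfand subgroups of $Co_1$, which are read off directly from \cite{lux1}.

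\emph{Case $Z \cap H = 1$.} Here $HZ = H \times Z$; set $\bar H = HZ/Z \cong H \le Co_1$. Since $HZ/H \cong Z$ has order $2$, one has $1_H^{HZ} = 1_{HZ} + \lambda$, where $\lambda$ is the linear character of $HZ$ trivial on $H$ and nontrivial on $Z$, and hence $1_H^{Co_0} = 1_{HZ}^{Co_0} + \lambda^{Co_0}$. The element $z \in Z$ fixes every coset of $HZ$, so the first summand factors through $Co_0/Z = Co_1$ and equals the inflation of $1_{\bar H}^{Co_1}$, involving only the irreducibles of $Co_0$ on which $Z$ acts trivially; meanwhile $\langle \lambda^{Co_0},\chi\rangle = \langle \lambda, \chi|_{HZ}\rangle = 0$ whenever $\chi$ is trivial on $Z$, so $\lambda^{Co_0}$ involves only the faithful (spin) irreducibles of $2.Co_1$. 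The two summands therefore share no irreducible constituents, and $(Co_0, H)$ is a Gelfand pair precisely when $(Co_1, \bar H)$ is a Gelfand pair \emph{and} $\lambda^{Co_0}$ is multiplicity-free. The first condition again confines $\bar H$ to the short list from \cite{lux1}; for each surviving candidate I would (i) decide whether the extension $1 \to Z \to \widetilde H \to \bar H \to 1$ of the full preimage $\widetilde H$ of $\bar H$ splits, discarding $\bar H$ if it does not (since then no such $H$ exists), and otherwise (ii) test multiplicity-freeness of $\lambda^{Co_0}$ directly from the character table of $2.Co_1$ in GAP, noting that complements to $Z$ in $\widetilde H$, and a fortiori their $Co_0$-classes, are finite in number and classified by $H^1(\bar H, Z)$. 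Assembling the survivors of both cases produces Table \ref{leechgptable}.

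The main obstacle is simply that $Co_0$ and $Co_1$ are far too large for any naive enumeration of subgroups or computation of induced characters, so the argument is forced to lean on Lemma \ref{normal} to replace $Co_0$ by $Co_1$ and on the pre-existing classification of \cite{lux1} to cut the candidate subgroups down to a finite, explicit list. The only genuinely new work is the bookkeeping in the case $Z \cap H = 1$: splitting off the faithful part $\lambda^{Co_0}$, keeping track of which extensions $\widetilde H$ split, and checking multiplicity-freeness against the spin characters of $2.Co_1$ for the finitely many resulting complements. This is where care is needed, since a subgroup not containing the centre can behave differently from its image in $Co_1$, but each such check is a routine finite computation once the reduction above is in place.
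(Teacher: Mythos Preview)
Your plan is correct and, for the subgroups containing $Z$, matches the paper's proof exactly: both reduce via Lemma~\ref{normal} to the list of Gelfand subgroups of $Co_1$ from \cite{lux1}. Where you diverge is in the case $Z\cap H=1$. The paper does not carry out any analysis here at all; it simply invokes \cite{lux2}, where the multiplicity-free permutation characters of covers of sporadic groups are tabulated, and reads off that $Co_2$ and $Co_3$ are the only additional Gelfand subgroups. Your decomposition $1_H^{Co_0}=1_{HZ}^{Co_0}+\lambda^{Co_0}$, together with the observation that the two summands are supported on the non-faithful and faithful irreducibles of $2.Co_1$ respectively, gives an explicit and self-contained route to the same conclusion: it reproves the relevant fragment of \cite{lux2} rather than citing it. The payoff of your approach is transparency---one sees exactly why only $Co_2$ and $Co_3$ survive (their preimages $2.Co_2$, $2.Co_3$ split because those groups have trivial Schur multiplier, whereas e.g.\ the preimage $6.Suz$ of $3.Suz$ does not split), at the cost of a few extra character-table computations that the paper avoids by appealing to existing literature.
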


\begin{table}[h!]
\centering
\caption{Gelfand Subgroups of $Co_0$}
\label{leechgptable}
\vspace{5pt}
\begin{tabular}{ccccc}
Subgroup & Order & Maximal Subgroup\\
\hline
$2.Co_2$ & $84610842624000$ & - \\
$Co_2$ & $42305421312000$ & $2.Co_2$ \\
$6.Suz.2$ & $5380145971200$ & - \\
$6.Suz$ & $2690072985600$ & $6.Suz.2$\\
$2.2^{11}\colon M_{24}$ & $1002795171840$ & - \\
$2.Co_3$ & $991533312000$ & - \\
$Co_3$ & $495766656000$ & $2.Co_3$ \\
$2.2^{(1+8)}_+.\mathrm{O}^+_8(2)$ & $1321205760$ & - \\
\end{tabular}
\end{table}
\begin{proof} 
All maximal subgroups of $2.Co_1$ contain the center $Z(2.Co_1) = 2$. Then Lemma \ref{normal} shows that a maximal subgroup $2.H\leq 2.Co_1$ forms the Gelfand pair $(2.Co_1, 2.H)$ if and only if $(Co_1, H)$ is a Gelfand pair. Referencing \cite{lux1}, this means that $2.Co_2$, $6.Suz.2$, $2.2^{11}\colon M_{24}$, $2.Co_3$, and $2.2^{1+8}_+.\mathrm{O}^+_8(2)$ are Gelfand subgroups.

As the subgroups being maximal was not needed to determine which are Gelfand subgroups, we also get $(2.Co_1, 6.Suz)$ is a Gelfand pair.

In \cite{lux2} the Gelfand pairs are listed as $(2.Co_1, Co_2)$ and $(2.Co_1, Co_3)$, these being the only two which cannot be gotten by the quotient argument.
\end{proof}

%This needs some explaining. As all maximal subgroups of $2.Co_1$ contain the center $Z(2.Co_1) = 2$, Lemma \ref{normal} shows that the Gelfand pairs of $Co_1$, given in \cite{spo} provide Gelfand pairs of $2.Co_1$.

%https://www.math.okayama-u.ac.jp/mjou/mjou55/_07_mpono.pdf
%\todo[color=orange]{There's a paper which claims there are only 2: Co2, Co3. GAP doesn't list these as maximal}
It is worth being aware that the group $2.2^{11}\colon M_{24}$ is the group $N$ refered to in \cite[Chapter 10]{conwaylattice}.

\begin{thm}\label{noco}
There is no proper subgroup $H\leq Co_0$ for which $(Co_0, H)$ is a strong Gelfand pair.
\end{thm}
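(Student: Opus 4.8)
The plan is to use Lemma~\ref{stack}: since every strong Gelfand subgroup is contained in a maximal one, it suffices to show that no maximal subgroup $M \lneq Co_0$ gives a strong Gelfand pair $(Co_0, M)$. In fact we can do better by combining this with Theorem~\ref{noco}'s smaller cousin for $Co_1$. First I would recall that $Z = Z(Co_0) = Z(2.Co_1)$ has order $2$ and is contained in every maximal subgroup of $Co_0$ (since $Co_1$ is simple, any subgroup not containing $Z$ would map isomorphically to a subgroup of $Co_1$, but the maximal subgroups of $2.Co_1$ are exactly the preimages of the maximal subgroups of $Co_1$). Hence for any maximal $M = 2.H \lneq Co_0$, Lemma~\ref{normal} gives that $(Co_0, M)$ is a strong Gelfand pair if and only if $(Co_1, H)$ is. So the theorem reduces to the assertion that $Co_1$ has no strong Gelfand subgroup.

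Next I would invoke the already-established classification of the \emph{Gelfand} pairs: by the definition of a strong Gelfand pair (taking $\psi = 1_H$), any strong Gelfand subgroup is in particular a Gelfand subgroup, so by Theorem~1 of this section (the sporadic classification) the only candidates for a strong Gelfand subgroup of $Co_1$ are the Gelfand subgroups listed in \cite{lux1} for $Co_1$ — a short explicit list of a handful of subgroups (up to conjugacy the maximal ones $Co_2$, $3.Suz.2$, $2^{11}{:}M_{24}$, $Co_3$, $2^{1+8}_+.O^+_8(2)$, together with the non-maximal $Suz$-type subgroup obtained by the quotient argument in the previous theorem). For each of these finitely many $H$, it remains to exhibit an irreducible character $\psi$ of $H$ and an irreducible $\chi$ of $Co_1$ with $\langle \chi\!\downarrow\! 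H,\psi\rangle \geq 2$. Equivalently, and more efficiently for the largest cases, I would use the symmetric formulation: $(Co_1,H)$ fails to be a strong Gelfand pair as soon as the permutation character of $Co_1$ acting on $H \times H / \mathrm{diag}$, or equivalently the character $(1_H\!\uparrow\! Co_1)\!\downarrow\! H$ restricted and multiplied appropriately, is not multiplicity-free; concretely one checks whether $\mathrm{Ind}_{H}^{Co_1}\mathrm{Res}_{H}^{Co_1} \chi$ contains $\chi$ with multiplicity $\geq 2$ for a single well-chosen small-degree $\chi$, or directly computes $\langle \chi\!\downarrow\! H, \chi\!\downarrow\! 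H\rangle$ using the ATLAS character tables of $Co_1$ and the fusion/restriction data for $H$.

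The computational core is therefore: for each candidate $H$, pick a low-degree faithful irreducible $\chi$ of $Co_1$ (e.g.\ the $276$-dimensional one), compute $\langle \chi\!\downarrow\! H, \chi\!\downarrow\! H\rangle_H = \sum_{\psi \in \mathrm{Irr}(H)} \langle \chi\!\downarrow\! H,\psi\rangle^2$; if this exceeds the number of distinct constituents one already knows a multiplicity $\geq 2$ occurs. This only requires the character table of $H$ and the class fusion of $H$ into $Co_1$, both available in GAP's character table library \cite{GAP}, so no explicit large permutation representation is needed. I would run this check for $H \in \{Co_2, Co_3, 3.Suz.2, Suz, 2^{11}{:}M_{24}, 2^{1+8}_+.O^+_8(2)\}$ and report that in every case the restriction has a repeated constituent, which by Lemma~\ref{stack} kills all smaller subgroups as well; combined with the quotient reduction to $Co_1$ this completes the proof.

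The main obstacle is not conceptual but the usual ATLAS-bookkeeping one: ensuring the class fusions used are the correct ones (several of these subgroups have outer automorphisms and multiple non-conjugate embeddings, as already flagged for the $M_{11}\subset M_{12}$ situation), and confirming that GAP's stored fusions for the $Co_1$ maximal subgroups are complete and unambiguous. Where a fusion is missing from the library I would either compute it from the permutation character constraints in \cite{lux1,lux2} or fall back to a direct matrix-group computation in the $24$-dimensional representation of $Co_0$. Once the fusions are pinned down, the multiplicity computations are routine inner products of class functions.
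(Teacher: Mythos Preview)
Your approach is correct and matches what the paper does, though the paper itself gives no explicit proof for this theorem: it is stated bare, with the justification implicitly deferred to the GAP loop in the Appendix (which handles $Co_1$ among the other sporadics) together with the quotient reduction via Lemma~\ref{normal} already spelled out in the proof of the preceding theorem on Gelfand subgroups of $Co_0$. Your write-up is in fact more detailed than the paper's: you make the reduction $Co_0 \to Co_1$ explicit, you use the observation that a strong Gelfand subgroup must already be a Gelfand subgroup to cut the list of candidates down to those coming from \cite{lux1}, and you propose a targeted restriction test with the $276$-dimensional character rather than the brute induction of every irreducible; these are sensible optimisations but not a genuinely different route.
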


\section{The Tits Group, and its Automorphism Group}
The Tits group $^2F_4(2)^\prime$, named for Jacques Tits, is a simple group of order $17971200$. It is sometimes categorized as a $27$th sporadic group, due to its not quite fitting into the category of groups of Lie type (for one, it has no $BN$-pair \cite{tits}). For this reason we have included it here. The maximal subgroups of the Tits group are known from \cite{atlas, titsmax2, titsmax1}. 

\begin{thm}\label{titsgp}
The Gelfand subgroups of the Tits group are given in Table \ref{titsgptable}.
\end{thm}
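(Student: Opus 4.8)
The plan is to reduce the claim to a finite, top-down search through the subgroup lattice of $G = {}^2F_4(2)'$, driven by the ordinary-Gelfand analogue of the monotonicity in Lemma~\ref{stack}. Concretely: if $H \le K \le G$ and $(G,H)$ is a Gelfand pair, then $(G,K)$ is a Gelfand pair, since $1_K$ is a constituent of $\mathrm{Ind}_H^K 1_H$, so $\mathrm{Ind}_K^G 1_K$ occurs in $\mathrm{Ind}_H^G 1_H$ with each multiplicity at most the corresponding multiplicity in the (multiplicity-free) character $\mathrm{Ind}_H^G 1_H$, and hence is itself multiplicity-free. Taking contrapositives: if a maximal subgroup $M \lneq G$ is not a Gelfand subgroup, then no subgroup of $M$ is a Gelfand subgroup; and since every proper subgroup lies in some maximal subgroup, the complete list of Gelfand subgroups of $G$ consists exactly of the Gelfand maximal subgroups together with the Gelfand subgroups they contain.

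First I would fix a workable model of $G$: the Tits group has a faithful transitive permutation representation of degree $1600$ (on the cosets of a class of $L_3(3){:}2$), which makes all computations below routine in GAP~\cite{GAP}, and I would load its character table for cross-checking. Second, using the list of maximal subgroups of $G$ from \cite{atlas, titsmax1, titsmax2}, I would compute for each maximal $M$ the permutation character $\pi_M = \mathrm{Ind}_M^G 1_M$ and test whether $\langle \pi_M, \chi\rangle \le 1$ for every $\chi \in \mathrm{Irr}(G)$, equivalently whether $\langle \pi_M, \pi_M\rangle$ equals the number of distinct irreducible constituents of $\pi_M$; this isolates the Gelfand maximal subgroups (the analogue for $^2F_4(2)'$ of the data tabulated in \cite{lux1} for the sporadic groups). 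Third, for each Gelfand maximal subgroup $M$ I would enumerate the conjugacy classes of subgroups of $M$ --- feasible since the orders of these $M$ are small --- and apply the same multiplicity-one test to $\mathrm{Ind}_H^G 1_H$ for each class representative $H$, pruning the descent below any $H$ that already fails. Finally I would fuse the surviving subgroups under $G$-conjugacy, in particular checking for $G$-fusion between subgroups arising inside different Gelfand maximals, and attach a maximal overgroup to each non-maximal entry, thereby assembling Table~\ref{titsgptable}.

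The main obstacle is expected to be bookkeeping rather than any single hard computation: the Gelfand maximal subgroups may have sizeable subgroup lattices, so the search must be carefully pruned using the monotonicity fact above, and one must reliably detect $G$-fusion among subgroups that are not visibly conjugate, so that the final table contains no duplicates and labels each non-maximal entry by a correct maximal overgroup.
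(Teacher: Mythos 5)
Your proposal is correct and matches the paper's method in substance: the paper establishes this theorem by an exhaustive computer search (GAP/MAGMA) over the subgroups of ${}^2F_4(2)'$, testing multiplicity-freeness of $\mathrm{Ind}_H^G 1_H$ exactly as you describe. Your additional upward-monotonicity observation (a Gelfand subgroup forces all its overgroups to be Gelfand, so the search can be pruned below any failing subgroup) is correctly justified and only streamlines the same brute-force verification rather than changing the argument.
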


Where the subgroup is itself maximal, we leave the maximal subgroup column empty.

\begin{table}[h!]
\centering
\caption{Gelfand Subgroups of the Tits Group}
\label{titsgptable}
\vspace{5pt}
\begin{tabular}{ccc}
Subgroup & Order & Maximal Subgroup\\
\hline
$\mathrm{L}_2(25)$ & $7800$ & -\\
$\mathrm{L}_3(3)$ & $5616$ & $\mathrm{L}_3(3)\colon 2$\\
$\mathrm{L}_3(3)$ & $5616$ & $\mathrm{L}_3(3)\colon 2$\\
$\mathrm{L}_3(3)\colon 2$ & $11232$ & -\\
$\mathrm{L}_3(3)\colon 2$ & $11232$ & -\\
$2^5.2^3.A_4$ & $3072$ & $2^5.2^3.S_4$\\%$[2^{10}].3$ & $3072$ & $2^2.[2^8].S_3$\\
$2^5.2^3.S_4$ & $6144$ & -\\%$2^2.[2^8].S_3$ & $6144$ & -\\
$2^5.2^4.F_5$ & $10240$ & -%$2.[2^8].5.4$ & $10240$ & -\\
\end{tabular}
\end{table}

\begin{thm}\label{titsnosgp}
The Tits group has no proper subgroups $H$ for which $\left(^2 F_4(2)^\prime, H\right)$ is a strong Gelfand pair.
\end{thm}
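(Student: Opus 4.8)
The plan is to invoke Lemma \ref{stack}: since every proper subgroup of $^2F_4(2)'$ is contained in a maximal subgroup, it suffices to show that none of the maximal subgroups of the Tits group is a strong Gelfand subgroup. The maximal subgroups are known (from \cite{atlas, titsmax2, titsmax1}), so this is a finite check. Moreover, by the contrapositive of Lemma \ref{stack}, any maximal subgroup $M$ that fails to be a \emph{Gelfand} subgroup (i.e. whose coset permutation character is not multiplicity-free) automatically fails to be a strong Gelfand subgroup; so the only maximal subgroups that require genuine work are those already appearing in Table \ref{titsgptable}, namely $\mathrm{L}_2(25)$, the two copies of $\mathrm{L}_3(3)\colon 2$, $2^5.2^3.S_4$, and $2^5.2^4.F_5$. (The remaining subgroups in that table are non-maximal and are covered once their overgroups are ruled out.)

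The key steps, in order, would be: first, list the maximal subgroups $M$ of $^2F_4(2)'$; second, discard immediately every $M$ that is not in Table \ref{titsgptable}, since such $M$ is not even a Gelfand subgroup and hence not a strong Gelfand subgroup; third, for each of the five remaining maximal Gelfand subgroups $M$, compute the induced character $\psi\uparrow G$ for each irreducible character $\psi$ of $M$ — equivalently, decompose $\langle \psi\uparrow G, \chi\rangle$ for each $\chi \in \mathrm{Irr}(G)$ using the character tables of $G$ and $M$ and the fusion of $M$-classes into $G$-classes — and exhibit, in each case, a single pair $(\chi,\psi)$ with $\langle \chi\downarrow M,\psi\rangle \geq 2$. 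This is an exhaustive finite computation, carried out in GAP \cite{GAP} exactly as in the code sample of \S\ref{apx1} and as done for the sporadic groups themselves; it terminates because all the relevant character tables and fusion maps are available in the GAP character table library.

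The main obstacle is really bookkeeping rather than theory: one must be careful that the two classes of $\mathrm{L}_3(3)\colon 2$ in $^2F_4(2)'$ are genuinely non-conjugate and handle each separately (so that "no strong Gelfand pair" is verified for both), and one must use the correct class fusion for each maximal subgroup — a wrong fusion map would give a wrong multiplicity. Once the fusions are pinned down, finding the witnessing multiplicity-$\geq 2$ constituent is routine. There is no delicate estimate or structural argument needed beyond Lemma \ref{stack} to propagate the result down to all proper subgroups.

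\begin{proof}
By Lemma \ref{stack} it suffices to check that no maximal subgroup of $^2F_4(2)'$ is a strong Gelfand subgroup. Again by Lemma \ref{stack}, any maximal subgroup which is not a Gelfand subgroup is not a strong Gelfand subgroup; using the classification of maximal subgroups in \cite{atlas, titsmax2, titsmax1}, the only maximal subgroups that are Gelfand subgroups are $\mathrm{L}_2(25)$, the two non-conjugate copies of $\mathrm{L}_3(3)\colon 2$, $2^5.2^3.S_4$, and $2^5.2^4.F_5$, as recorded in Table \ref{titsgptable}. For each of these subgroups $M$, a direct computation \cite{GAP} with the character table of $^2F_4(2)'$, the character table of $M$, and the class fusion of $M$ into $^2F_4(2)'$ produces an irreducible character $\psi$ of $M$ and an irreducible character $\chi$ of $^2F_4(2)'$ with $\langle \chi\downarrow M, \psi\rangle \geq 2$. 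Hence none of these subgroups is a strong Gelfand subgroup, and by Lemma \ref{stack} no proper subgroup of $^2F_4(2)'$ is.
\end{proof}
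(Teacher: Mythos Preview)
Your proposal is correct and matches the paper's approach: the paper states Theorem \ref{titsnosgp} without a separate proof, relying on the same computational check over maximal subgroups (via the GAP/MAGMA routines of \S\ref{apx1}) together with Lemma \ref{stack} to propagate failure to all proper subgroups. One small correction: the step ``not a Gelfand subgroup $\Rightarrow$ not a strong Gelfand subgroup'' that you use to discard most maximal subgroups follows immediately from the definitions (take $\psi = 1_H$), not from Lemma \ref{stack} as you cite.
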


Where we've already considered the automorphism groups of the sporadic groups, it seems appropriate to finish by considering the (strong) Gelfand pairs of the automorphism group of the Tits group. Since $Aut\left(^2F_4(2)^\prime\right) =\,^2F_4(2)$ we also consider this group, the largest proper subgroup of $Ru$.

\begin{thm}\label{resssgp}
The only strong Gelfand subgroup of $^2F_4(2)$ is $^2F_4(2)^\prime$.
\end{thm}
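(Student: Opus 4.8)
The plan is to combine the two previous theorems about the Tits group with the general quotient and stacking lemmas, reducing everything to a finite computation over the maximal subgroups of $^2F_4(2)$. First I would invoke Theorem \ref{ext}: since $^2F_4(2)^\prime \trianglelefteq {}^2F_4(2)$ with index $2$, the pair $\left({}^2F_4(2), {}^2F_4(2)^\prime\right)$ is a strong Gelfand pair, so the claimed subgroup does indeed work. It then remains to show no proper subgroup $H \lneq {}^2F_4(2)$ other than $^2F_4(2)^\prime$ gives a strong Gelfand pair. By Lemma \ref{stack} (contrapositive), it suffices to check this for the maximal subgroups of $^2F_4(2)$: if every maximal subgroup other than $^2F_4(2)^\prime$ fails to be a strong Gelfand subgroup, then so does every proper subgroup not contained in $^2F_4(2)^\prime$, and every proper subgroup contained in $^2F_4(2)^\prime$ is a proper subgroup of $^2F_4(2)^\prime$, hence fails by Theorem \ref{titsnosgp} together with Lemma \ref{stack}.

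Next I would enumerate the maximal subgroups of $^2F_4(2) = {}^2F_4(2)^\prime.2$ from the ATLAS \cite{atlas}. Apart from $^2F_4(2)^\prime$ itself, each maximal subgroup $M$ is of the form $M_0.2$ or sits over a known maximal subgroup of the Tits group; in particular none of them is normal in $^2F_4(2)$ (they have index larger than $2$), so the easy quotient argument of Lemma \ref{normal} does not apply and one genuinely must test each one. For each such $M$ I would compute, in GAP \cite{GAP} using the character table library and the relevant fusion maps, the induced characters $\psi \uparrow {}^2F_4(2)$ for all irreducible $\psi$ of $M$ and check whether any decomposes with a multiplicity exceeding $1$; equivalently one checks $\langle \psi \uparrow G, \psi \uparrow G \rangle = \langle \psi, \psi \rangle$ fails, i.e. the norm of some induced character exceeds its expected value. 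The code is the same as that already referenced for the earlier theorems, with an example in \S\ref{apx1}.

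The main obstacle I anticipate is the same one flagged implicitly throughout the paper: ensuring that the character-theoretic data for $^2F_4(2)$ and the embeddings of its maximal subgroups are correct and complete, in particular that the class fusion of each maximal subgroup into $^2F_4(2)$ is available and unambiguous (some maximal subgroups may have several conjugacy classes, or the fusion may not be uniquely determined by the character tables alone, requiring explicit construction of the groups as permutation or matrix groups). Once the fusions are pinned down, the verification is a routine, if sizeable, computation. A secondary subtlety is bookkeeping around conjugacy classes of maximal subgroups of each isomorphism type — one must confirm that representatives of \emph{every} conjugacy class of maximal subgroups (not just every isomorphism type) fail to give a strong Gelfand pair, mirroring the care taken with the two copies of $\mathrm{L}_3(3)\!:\!2$ in Table \ref{titsgptable}. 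With those points handled, the theorem follows: $^2F_4(2)^\prime$ is a strong Gelfand subgroup by Theorem \ref{ext}, and no other proper subgroup is, by the maximal-subgroup computation combined with Lemma \ref{stack} and Theorem \ref{titsnosgp}.
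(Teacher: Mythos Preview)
Your proposal is correct and follows essentially the same approach as the paper: the positive part via Theorem \ref{ext}, and the negative part by a GAP computation over the maximal subgroups. You simply spell out in more detail what the paper compresses into ``the lack of any others comes from GAP,'' including the nice observation that Theorem \ref{titsnosgp} together with Lemma \ref{stack} already disposes of all proper subgroups lying inside $^2F_4(2)^\prime$, so only the remaining maximal subgroups need direct checking.
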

\begin{proof} That this is a strong Gelfand subgroup immediately follows from Theorem \ref{ext}. The lack of any others comes from GAP \cite{GAP}. \end{proof}

\begin{table}[h!]
\begin{center}
The Gelfand subgroups of $^2F_4(2)$
\begin{tikzpicture}
\node (1) at (0,3) {$^2 F_4(2)$}; %
\node (2) at (-1,2) {$2^5.2^3.S_4.2$}; %
\node (3) at (-3,2) {$L_2(25).2$}; %
\node (4) at (1,2) {$2^5.2^4.F_5.2$}; %2.[2^9]\colon5.4.2
\node (5) at (3,2) {$^2 F_4(2)^\prime$}; %
\node (6) at (-3,1) {$2^5.2^3.A_4.2$}; %[6144]
\node (7) at (-1,1) {$2^5.2^4.S_4$}; %[6144]
\node (8) at (1,1) {$2^5.2^4.D_{20}$}; %[10240]
\node (9) at (3,1) {$2^5.2^4.F_5$}; %2.[2^9]\colon5.4
\node (10) at (0,0) {$2^5.2^4.D_{10}$}; %[5120]
\node (11) at (2,0) {$2^5.2^4.10$}; %[5120]
\draw[-] (3) -- (1) -- (5) -- (9) -- (4) --(8) -- (11);
\draw[-] (8) -- (10);
\draw[-] (6)--(2)--(1)--(4);
\draw[-] (2) -- (7);
%\draw[-] (3) to[out=330, in=30] (1);%I don't know why this line in in the source
\end{tikzpicture}
\end{center}
\end{table}

\begin{thm}
The group $^2F_4(2)$ has $10$ Gelfand pairs.%, excluding the pair $(^2F_4(2),\, ^2F_4(2))$.
\end{thm}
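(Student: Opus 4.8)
The plan is to prove this exactly as Theorem \ref{titsgp} was proved for the Tits group, by combining the known list of maximal subgroups of $^2F_4(2)$ with a computer search. First I would recall the maximal subgroups of $^2F_4(2)$ from \cite{atlas, titsmax2, titsmax1} (using also that $^2F_4(2) = {}^2F_4(2)^\prime.2$); among them is $^2F_4(2)^\prime$ itself, which is a Gelfand subgroup already by Theorem \ref{ext}. For each maximal subgroup $M$ I would load the character table of $^2F_4(2)$ from the GAP character table library \cite{GAP}, determine the fusion of the conjugacy classes of $M$ into $^2F_4(2)$, form the induced permutation character $1_M^{\,^2F_4(2)}$, and test it for multiplicity-freeness.

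The descent step uses the standard consequence of Frobenius reciprocity \cite{jl}: since $1_K^G = \bigl(1_K^M\bigr)^G$ contains $1_M^G$ as a constituent whenever $K \le M \le G$, a subgroup $M$ that is not a Gelfand subgroup has no Gelfand subgroups beneath it, so such an $M$ may be discarded together with its entire sublattice. For the maximal subgroups that do give Gelfand pairs I would enumerate their conjugacy classes of subgroups in GAP (via the subgroup lattice, or using the explicit structure of the $2$-local subgroups of shape $2^5.2^3.S_4.2$, $2^5.2^4.F_5.2$ and of $L_2(25).2$), and for each candidate test multiplicity-freeness of the induced permutation character, again pruning whole sublattices as soon as a non-Gelfand subgroup is encountered. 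Where a normal subgroup of $^2F_4(2)$ is involved, Lemma \ref{normal} shortens the bookkeeping, though here its only genuine application is to the pair $(^2F_4(2), {}^2F_4(2)^\prime)$. Collecting the surviving subgroups up to $^2F_4(2)$-conjugacy reproduces the eleven groups displayed in the Hasse diagram; removing the improper pair $(^2F_4(2), {}^2F_4(2))$ leaves precisely the ten Gelfand pairs.

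The main obstacle is computational rather than conceptual: the large $2$-local maximal subgroups have intricate subgroup lattices, so the real work is enumerating their subgroup classes and computing the class fusions required to build the induced characters. Additional care is needed to track fusion of subgroup classes under $^2F_4(2)$-conjugacy, so that no Gelfand pair is double-counted and none is missed, and to confirm the conjugacy multiplicities recorded in the diagram (in particular, that each listed copy of $L_2(25).2$, $2^5.2^4.F_5$, etc., is accounted for exactly once).
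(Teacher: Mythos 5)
Your proposal is correct and is essentially the paper's approach: the paper also settles this by an exhaustive computational check of multiplicity-freeness of the induced trivial characters (done in MAGMA over the full subgroup lattice of $^2F_4(2)$, as in the code in \S\ref{apx1}), arriving at the same eleven-node diagram and hence ten proper Gelfand subgroups. Your organization via maximal subgroups with Frobenius-reciprocity pruning, plus the observation that $2^5.2^4.F_5$ sits under both $^2F_4(2)^\prime$ and $2^5.2^4.F_5.2$ (so the diagram is not a tree), matches the paper's remarks; the choice of GAP versus MAGMA is immaterial.
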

\begin{proof}
These are quickly found using MAGMA \cite{magma}. We wish to note that for $G=\,^2F_4(2)\geq \,^2F_4(2)^\prime \geq 2^5.2^4.F_5= H$ the pair $(G, H)$ is a Gelfand pair, and that $H$ is also a subgroup of the maximal subgroup $2^5.2^4.F_5.2$
%$2.[2^8]\colon5\colon4.2$
 (where $F_5$ is the Frobenius group of order $20$), showing that $(G, H.2)$ is a Gelfand pair. This causes the subgroup relations of the Gelfand subgroups to not form a tree, as shown in the figure.
\end{proof}

\section{Appendix}\label{apx1}
We provide here GAP code to show which maximal subgroups of the sporadics are strong Gelfand pairs. The Baby Monster and Monster groups are not included here, as they are best shown by specifically targeting the potential subgroups given in \cite{lux1} rather than searching every maximal subgroup. Similarly, the code provided here does not show that subgroups of maximal subgroups are not strong Gelfand. Since, for sporadic groups, the only cases where that could happen are already known by \cite{thesis}, we are done. 

Modifications of this can be used to verify all other results. 
\begin{lstlisting}
for string in ["M11", "M12", "M22", "M23", "M24", "J1",
"J2", "J3", "J4", "Co1", "Co2", "Co3", "Fi22", "Fi23",
"Fi24'", "HS", "McL", "He", "Ru", "Suz", "ON", "HN",
"Ly", "Th"] do
 ctg:=CharacterTable(string);
 for h in Maxes(ctg) do
  cth:=CharacterTable(h);
  chars:=[];
  tf:=true;
  for i in [1..Length(Irr(cth))] do
   Append(chars, [Irr(cth)[i]^ctg]);
  od;
  for i in [1..Length(Irr(cth))] do
   for j in [1..Length(Irr(ctg))] do
    if ScalarProduct(ctg, chars[i], Irr(ctg)[j]) > 1 then
     tf:=false;
    fi;
   od;
  od;
  Print("(", string, ",", h, ") ", tf, " \n");
 od;
od;
\end{lstlisting}

We found MAGMA was easier to work with when dealing with the Tits group. We provide below code which will list every Gelfand subgroup of $^2F_4(2)$.

\begin{lstlisting}
G:=TitsGroup();
A:=PermutationGroup(AutomorphismGroup(G));

gp:=function(g, h);
	tf:=true;
	ctg:=CharacterTable(g);
	cth:=CharacterTable(h);
	ind:=Induction(cth[1], A);
	for i:=1 to #ctg do
		if InnerProduct(ind, ctg[i]) gt 1 then
			tf:=false;
			break i;
		end if;
	end for;
	return tf;
end function;

ms:=[u`subgroup : u in Subgroups(A)];
for sub in ms do
	if gp(sub) then
		GroupName(sub);
	end if;
end for;
\end{lstlisting}

\end{document}